\newtheorem{thm}{Theorem}
\newtheorem*{thm*}{Theorem}
\newtheorem*{lem*}{Lemma}
\theoremstyle{remark}
\newtheorem{rema}{Remark}[section]
\newcommand{\bna}{\mbox{\boldmath$ \nabla$}}
\newcommand{\blp}{\mbox{\boldmath$ \Delta$}}
\def\mb{\mathbf}
\def\mc{\mathcal}
\newlength{\equwidth}
\DeclareMathOperator{\Ric}{Ric}
\DeclareMathOperator{\id}{id}
\def\X{\mathfrak X}
\def\la{\lambda}
\def\Ups{\Upsilon}
\def\Om{\Omega}
\def\pa{\partial}
\def\goesto{\rightarrow}
\def\squig{\rightsquigarrow}
\def\na{\mathrm{\nabla}}
\def\rr{\ensuremath{\mathbb{R}}}
\def\O{\ensuremath{\mathrm{O}}}
\def\ce{\ensuremath{\mathcal{E}}}
\def\today{\ifcase\month\or
 January\or February\or March\or April\or May\or June\or
 July\or August\or September\or October\or November\or December\fi
 \space\number\day, \number\year}
\def\mb{\mathbf}
\def\mc{\mathcal}
\def\d{\operatorname{d}\!}
\newcommand{\ind}{\indices}
\title[Ambient metrics of Patterson--Walker metrics]{Fefferman--Graham ambient metrics of Patterson--Walker metrics}
\author[Hammerl]{Matthias Hammerl}
\author[Sagerschnig]{Katja Sagerschnig}
\thanks{KS is an INdAM (Istituto Nazionale di Alta Matematica) research fellow} 
\author[\v{S}ilhan]{Josef \v{S}ilhan}
\thanks{J\v{S} was supported by the Czech science foundation (GA\v{C}R) under grant P201/12/G028} 
\author[Taghavi-Chabert]{Arman Taghavi-Chabert}
\author[\v{Z}\'adn\'ik]{Vojt\v{e}ch \v{Z}\'adn\'ik}
\address{M. H.: 
University of Greifswald, Department of Mathematics and Informatics, Walther-Rathenau-Str. 47, 17489 Greifswald, Germany
\newline K. S.: INdAM-Politecnico di Torino, Dipartimento di Scienze Matematiche, corso Duca degli Abruzzi 24, 10129 Torino, Italy \newline J. \v S. and A. T.-C..: 
Masaryk University, Faculty of Science, Kotl\'{a}\v{r}sk\'{a} 2, 61137 Brno, Czech Republic
\newline 
V. \v{Z}: Masaryk University, Faculty of Education, Po\v{r}\'\i\v{c}\'\i\ 31, 60300 Brno, Czech Republic
}
\email{matthias.hammerl@univie.ac.at, katja.sagerschnig@univie.ac.at, silhan@math.muni.cz, taghavia@math.muni.cz, zadnik@mail.muni.cz}
\date{\today}
\subjclass[2000]{53A30, 53A55, 53B30} 
\keywords{Fefferman--Graham ambient metrics, Patterson--Walker metrics, Conformal geometry, Q-curvature}
\begin{document}

\maketitle

\begin{abstract}
\vspace*{-0.8cm}
Given an $n$-dimensional manifold $N$ with an affine connection $D$, we show that the associated Patterson--Walker metric $g$ on $T^*N$ admits a global and explicit Fefferman--Graham ambient metric. This provides a new and large class of conformal structures which are generically not conformally Einstein but for which the ambient metric exists to all orders and can be realized in a natural and explicit way. In particular, it follows that Patterson--Walker metrics have vanishing Fefferman--Graham obstruction tensors. As an application of the concrete ambient metric realization we show in addition that Patterson--Walker metrics have vanishing Q-curvature. 
\end{abstract}

\section{Introduction and main result}
Given a signature $(p,q)$ conformal structure $[g]$ on an $m=p+q$\ dimensional manifold $M$, it was shown in seminal work by Charles Fefferman and C. Robin Graham (see \cite{fefferman-graham,fefferman-graham-ambient}) that under specific conditions the conformal structure can be encoded equivalently as a signature $(p+1,q+1)$ pseudo-Riemannian metric $(\mb{M},\mb{g})$ with vanishing Ricci curvature. This description has been fundamental in constructing and classifying conformal invariants (see e.g. \cite{fefferman-graham, bailey-eastwood-graham}) and for constructing and studying conformally invariant differential operators (see \cite{graham-nonexistence, GJMS}). 

To build the Fefferman--Graham ambient metric for given local coordinates $x$ on $M$, one first considers the ray bundle of metrics in the conformal class $[g]$, written as $\rr_+\times \rr^m$ with coordinates $(t,x)$.
The ambient space $\mb{M}$ is obtained by adding a new transversal coordinate
$\rho\in\rr$, and then an \emph{ansatz} for the Fefferman-Graham ambient metric $\mb{g}$ is
\begin{align}\label{FG}
     \mb{g}=t^2g_{ij}(x,\rho)dx^i\odot dx^j+2\rho dt\odot dt+2tdt\odot d\rho,
\end{align}
where $g=g_{ij}(x,0)dx^idx^j$ is a representative metric in the conformal class.
It is directly visible from the formula that $\mb{g}$ is homogeneous of degree $2$ with respect to the \emph{Euler field} $t\pa_t$ on $\mb{M}$.

To show existence of a Fefferman-Graham ambient metric $\mb{g}$ for given $g$,  the \emph{ansatz}\ \eqref{FG} determines an iterative procedure to determine $g_{ij}(x,\rho)$ as a Taylor series in $\rho$ satisfying $\Ric(\mb{g})=0$ to infinite order at $\rho=0$. For $m$ odd existence (and a natural version of uniqueness) of $\mb{g}$ as an infinity-order series expansion in $\rho$ is guaranteed for general $g_{ij}(x)$. For $m=2n$ even, the existence of an infinity order jet for $g_{ij}(x,\rho)$ with $\Ric(\mb{g})=0$ asymptotically at $\rho=0$ is obstructed at order $n$. Existence of $g_{ij}(x,\rho)$ as an infinity order series expansion in $\rho$ with $\Ric(\mb{g})=0$ asymptotically at $\rho=0$ is then equivalent to vanishing of the Fefferman--Graham obstruction tensor $\mc{O}$, which is a conformal invariant. Existence of $\mb{g}$ for $m=2n$ even does not in general guarantee uniqueness.

Results which provide global Fefferman--Graham ambient metrics,
where $\mb{g}$ can then be constructed in a natural way from $g$ and satisfies $\Ric(\mb{g})$ globally and not just asymptotically at $\rho=0$ are rare, both in the odd- and even-dimensional situation. A special instance where global ambient metrics can at least be shown to exist occurs for $g$ real--analytic, and $m$ either being odd or $m$ even and with obstruction tensor $\mc{O}$ of $g$ vanishing.
The simplest case of geometric origin for which one has global ambient metrics consists of locally conformally flat structures $(M,[g])$, where $(\mb{M},\mb{g})$ exists and is unique up to diffeomorphisms (see \cite{fefferman-graham-ambient}, ch. 7).
Another well known geometric case 
are conformal structures $(M,[g])$ which contain an Einstein metric $g$:
If $\Ric(g)=2\la(m-1)g$, then $\mb{g}$ on $\rr_+\times M\times \rr$ can be written directly in terms of $g$ as
\begin{align}\label{ambient-einstein}
     \mb{g}=t^2(1+\la\rho)^2g+2\rho dt\odot dt+2tdt\odot d\rho.
\end{align}

 In work by Thomas Leistner and Pawel Nurowski it was shown that the so called \emph{pp-waves} admit global ambient metrics in the odd-dimensional case and under specific assumptions in the even-dimensional case, see \cite{leistner-nurowski-pp}. Concrete and explicit ambient metrics for specific examples of families of conformal structures induced by generic $2$-distributions on $5$-manifolds 
and generic $3$-distributions on $6$ manifolds have been constructed in \cite{nurowski-explicit, leistner-nurowski-g2ambient, willse-missing, anderson-leistner-nurowski}.

The present article expands the class of metrics for which canonical ambient metrics exist globally to \emph{Patterson--Walker metrics}: Given
an affine connection $D$ on an $n$-manifold $N$ with $n\geq 2$, which is supposed to be torsion--free and to preserve a volume form, the \emph{Patterson--Walker metric} $g$ is a natural split--signature $(n,n)$ metric on $T^*N$, see recent work \cite{hsstz-walker} for historical background on Patterson--Walker metrics, references and a modern treatment. Our main result is:

\begin{thm}\label{thm1}
Let $D$ be a torsion-free affine connection on $N$ which preserves a volume form. Denote local coordinates on $N$ by $x^A$ and the induced canonical fibre coordinates on $T^*N$ by $p_A$. Let $\Gamma \ind{_A^C_B}$ denote the Christoffel symbols of $D$. 
Let 
  \begin{align}
    g &=  2 \, \d x^A \odot \d p_A -2 \, \Gamma \ind{_A^C_B} \, p_C  \, \d x^A \odot \d x^B \, \label{formula-PW}
  \end{align}
be the Patterson--Walker metric induced on $T^*N$ by $D$.
Then, with $\Ric_{AB}$ the Ricci curvature of $D$,
  \begin{align}\label{ambient-local}
     &\mb{g}=2\rho dt\odot dt+2tdt\odot d\rho \\
&+t^2(2dx^A\odot dp_A-2p_C\Gamma \ind{_A^C_B} dx^A\odot dx^B+\frac{2\rho}{n-1}\Ric_{AB}dx^A\odot dx^B)\notag
  \end{align}
 is a globally Ricci-flat Fefferman--Graham ambient metric for the conformal class $[g]$.
\end{thm}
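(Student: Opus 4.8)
The metric \eqref{ambient-local} is manifestly in the Fefferman--Graham normal form \eqref{FG}: the summand $2\rho\,dt\odot dt+2t\,dt\odot d\rho$ is the universal part, and the remaining factor is $t^2 g_{ij}(x,\rho)\,dx^i\odot dx^j$ with
\[
g_{ij}(x,\rho)\,dx^i\odot dx^j = 2\,dx^A\odot dp_A - 2p_C\Gamma\ind{_A^C_B}\,dx^A\odot dx^B + \frac{2\rho}{n-1}\Ric_{AB}\,dx^A\odot dx^B .
\]
Setting $\rho=0$ recovers \eqref{formula-PW}, so $\mb g$ restricts on $\rho=0$ to the Patterson--Walker metric $g$ and hence induces the conformal class $[g]$. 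The striking feature is that the $\rho$-expansion of $g_{ij}(x,\rho)$ terminates at first order; consequently the only remaining content of the theorem is the single global identity $\Ric(\mb g)=0$. Once this is in hand, the asymptotic Fefferman--Graham condition, the vanishing of the obstruction tensor $\mc O$, and the realization of $\mb g$ as a genuine ambient metric all follow at once.

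My primary plan is to verify $\Ric(\mb g)=0$ by a direct curvature computation in the coordinates $(t,x^A,p_A,\rho)$, organized by the block structure of $\mb g$. The nonzero components are $\mb g_{tt}=2\rho$, $\mb g_{t\rho}=t$, $\mb g_{x^Ap_B}=t^2\delta_A^B$, and $\mb g_{x^Ax^B}=t^2\bigl(-2p_C\Gamma\ind{_A^C_B}+\tfrac{2\rho}{n-1}\Ric_{AB}\bigr)$; in particular the $(x,p)$-sector is of the off-diagonal Walker type. Because the fibre coordinates $p_A$ and the transversal coordinate $\rho$ enter $\mb g$ only linearly, their second derivatives drop out and the Christoffel symbols of $\mb g$ are low-degree polynomials in $(p,\rho)$ whose coefficients are built from $\Gamma\ind{_A^C_B}$ and its first derivatives. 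The $t$-homogeneity (every $(x,p)$-component carries a factor $t^2$) is absorbed by the Euler-field structure built into \eqref{FG}, so the components of $\Ric(\mb g)$ in the $t$- and $\rho$-directions reduce to the same formal identities that already hold for the flat and Einstein templates \eqref{ambient-einstein}.

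The main obstacle is the purely horizontal $x^Ax^B$ block. There the computation produces the curvature of $D$ through the $\partial_x\Gamma$ terms, and the point is to show that these cancel against the contribution of the correction term $\tfrac{2\rho}{n-1}\Ric_{AB}$. I expect this cancellation to be forced by exactly three inputs: torsion-freeness (symmetry of $\Gamma\ind{_A^C_B}$ in $A,B$); the volume-preserving hypothesis, which makes $\Ric_{AB}$ symmetric and pins the trace $\Gamma\ind{_A^C_C}$ to an exact form, thereby killing all trace and skew contributions; and the differential Bianchi identity for $D$, to dispose of the $\nabla\Ric$ terms that arise upon differentiating the correction. The coefficient $\tfrac{1}{n-1}$ is the value that matches the trace of the Riemann tensor of $D$ to $\Ric_{AB}$ in dimension $n$, i.e. the projective Schouten normalization, which is why it is the unique coefficient making this block vanish.

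Finally, I would record a conceptual route that both explains the mechanism and can replace the computation. The appearance of $\tfrac{1}{n-1}$ and the role of $\rho$ as a transversal/homogeneity variable strongly suggest that $(\mb M,\mb g)$ is, up to a diffeomorphism, the Patterson--Walker metric of the \emph{Thomas cone}: the connection $D$ determines, through its projective class and the chosen volume, a torsion-free, volume-preserving, Ricci-flat connection $\wt D$ on an $(n+1)$-dimensional cone $\wt N$, and applying \eqref{formula-PW} to $\wt D$ on the $(2n+2)$-dimensional $T^*\wt N$ should reproduce \eqref{ambient-local} once $t$ is identified with the cone fibre coordinate and $\rho$ with the dual fibre variable. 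Granting this identification, $\Ric(\mb g)=0$ follows immediately from the fact (see \cite{hsstz-walker}) that a Patterson--Walker metric is Ricci-flat precisely when its defining connection is, together with Ricci-flatness of $\wt D$. In this route the obstacle is transferred to setting up the precise change of variables between $\mb M$ and $T^*\wt N$ and to checking the two structural properties of $\wt D$, but it has the merit of making the global, all-orders nature of the result transparent.
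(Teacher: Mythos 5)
Your proposal is essentially correct, and the route you offer last as a ``conceptual alternative'' is in fact the paper's actual proof: the authors form the Thomas cone connection $\na$ on $\mc{C}=\ce_+(1)$ (torsion-free, volume-preserving, Ricci-flat, given in the scale of $D$ by $\na_XY=D_XY-\tfrac{1}{n-1}\Ric(X,Y)Z$, $\na Z=\id_{T\mc{C}}$), take the Patterson--Walker metric of $\na$ on $T^*\mc{C}$, which is Ricci-flat precisely because $\na$ is (Theorem 2 of \cite{hsstz-walker}), and then pass to the normal form \eqref{FG} by the explicit substitution $t=x^0$, $\rho=y_0/x^0$, $p_A=y_A/(x^0)^2$. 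So what you treat as a fallback is the intended argument, and the step you correctly flag as the remaining work --- the change of variables and the structural properties of the cone connection --- is exactly the only computation the paper carries out. Concerning your primary plan (direct verification of $\Ric(\mb{g})=0$ via the Christoffel symbols): the paper confirms this is feasible, using formula (3.17) of \cite{fefferman-graham}, but the mechanism is not quite the one you predict. The recursion truncates and all terms beyond first order vanish not because of the differential Bianchi identity, but because $\Ric(g)$ is, up to a constant multiple, the pullback of $\Ric_{AB}$ of $D$, and this tensor \emph{and its covariant derivative} are totally isotropic for the Walker metric: in the splitting \eqref{formula-PW} the inverse metric satisfies $g^{AB}=0$ on the purely horizontal block, so every $g^{-1}$-contraction of two copies of $\Ric_{AB}\,dx^A\odot dx^B$, and every divergence of it, vanishes identically. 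The first-order Fefferman--Graham equation fixes the coefficient $\tfrac{2}{n-1}$ (your Schouten-normalization remark is on target), and isotropy kills everything else. If you pursue the direct route, you should make this isotropy argument explicit; as written, the claim that the cancellation in the $x^Ax^B$ and mixed blocks is ``forced by torsion-freeness, symmetry of $\Ric$, and Bianchi'' does not identify the actual reason the series stops.
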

We note that as an immediate consequence of the existence of the ambient metric for $(M,[g])$, the conformally invariant Fefferman-Graham obstruction tensor $\mc{O}$ associated to $[g]$ vanishes. 

It is not difficult to check Ricci-flatness of \eqref{ambient-local} directly: Specifically, one employs formula (3.17) of \cite{fefferman-graham}, which is applicable to any ambient metric in normal form \eqref{FG}. The computation is then based on the following key facts: The Ricci curvature of the Patterson--Walker metric $g$ is up to a constant multiple just the pullback of the Ricci curvature of $D$ and this tensor and its covariant derivative are totally isotropic, see \cite{hsstz-walker}.
We note that formula \eqref{ambient-local} for $\mb{g}$ says that the ambient metric is in fact linear in $\rho$ and  the iterative procedure determining the ambient metric stops after the first step. 

A geometric proof of vanishing Ricci curvature is presented in the next section. This is based on a combination of the well known Patterson--Walker and Thomas cone constructions, both of which we recall.

\section{Geometric construction of the ambient metric}
The association $D\squig g$ generalizes to a natural association from projective to conformal structures. 
Recall that two affine connections $D,D'$ on $N$ are called projectively related or equivalent if they have the same geodesics as unparameterized curves, which is the case if and only if there exists a $1$-form 
$\Ups\in\Om^1(N)$ with
\begin{align}\label{projchange}
  D'_X Y=D_XY+\Ups(X)Y+\Ups(Y)X
\end{align}
for all $X,Y\in\X(N)$.
It is sufficient to restrict ourselves to \emph{special} connections in a projective class, i.e., to those that preserve some volume form.
For projective structures it is useful to employ a suitably scaled \emph{projective density bundle} $\ce_+(1)$, defined as the special case of \emph{weight} $w=1$ of
 $\mc{E}(w):=\left(\wedge^n T N\right)^{-\frac{w}{n+1}}.$
Then a section $s:N\goesto \ce_+(1)$ corresponds to a choice of a special affine connection $D$ in the projective equivalence class $[D]$, and any $s'=e^f s$ corresponds to $D'$ projectively related to $D$ via \eqref{projchange} with $\Ups=df$.
We define $M=T^*N(2)$ the (projectively) weighted co-tangent bundle of $N$. Then, as was shown in \cite{hsstz-walker}, two projectively related affine connections $D,D'$ on $N$ induce two conformally related metrics $g,g'$ on
$M$, and we therefore have a natural association $(N,[D])\squig (M,[g])$. 

The cone $\mc{C}:=\ce_+(1)$ carries the canonical and well known Ricci-flat \emph{Thomas cone connection} $\na$, see \cite{thomas-invariants} or \cite{cap-slovak-book}, where the specific weight $1$ (different from \cite{cap-slovak-book}) is most convenient for our computations. We will need a local formula for $\na$: Let $s:N\goesto \ce_+(1)$ be the scale corresponding to an affine connection $D\in[D]$, providing a trivialization
$\ce_+(1)\overset{}{\cong} \rr_+ \times N$ via
 $(x^0,x)\mapsto s(x)x^0.$
In this trivialization the Thomas cone connection is given by
\begin{align}\label{thomas}
  \na_XY=D_X Y-\frac{1}{n-1}\Ric(X,Y)Z,\;
  \na Z=\id_{T\mc{C}}
\end{align}
where $X,Y\in\X(N)$ and $Z=x^0\pa_{x^0}$ is the Euler field on $\mc{C}$. It is in fact easy to see directly from formula \eqref{thomas} that the thus defined affine connection $\na$ on the Thomas cone $\mc{C}$ is independent of the choice of scale and Ricci-flat.

Employing a local coordinate patch on $N$ which induces coordinates $x^A,\ y_A$ on the co-tangent bundle $T^*N$ and coordinates $x^0,x^A,y_A,y_0$ on $T^*\mc{C}\cong \rr_+\times T^*N\times \rr$, the Patterson--Walker metric $\mb{g}$ associated to $\na$ is
\begin{align}
  \mb{g}=&2dx^A\odot dy_A+2dx^0\odot dy_0-\frac{4}{x^0}y_Bdx^0\odot dx^B\\ &-2y_C\Gamma \ind{_A^C_B} dx^A\odot dx^B+2\frac{x^0y_0}{n-1}\Ric_{AB}dx^A\odot dx^B. \notag
\end{align}
Ricci-flatness of $\mb{g}$ follows directly from Ricci-flatness of $\na$, see Theorem 2 of \cite{hsstz-walker}.
Via the change of coordinates
$t=x^0,\ \rho=\frac{y_0}{x^0},\ p_A=\frac{y_A}{(x^0)^{2}}$
the metric $\mb{g}$ transforms to \eqref{ambient-local}, which is the form of a Fefferman--Graham ambient metric \eqref{FG}. In particular this shows Theorem \ref{thm1}.

\begin{rema}
 As a Patterson--Walker metric $(\mb{M},\mb{g})$ carries a naturally induced homothety $\mb{k}$ of degree $2$, which takes the form $2p_A \pa_{p_A}+2\rho\pa_{\rho}$. According to Lemma 5.1 of \cite{hsstz-walker} the infinitesimal affine symmetry $Z$ of $\na$ lifts to a Killing field, which one computes as $t\pa_t-2p_A \pa_{p_A}-2\rho\pa_{\rho}$. In particular it follows that the Euler field $t\pa_t$ of the Fefferman--Graham ambient metric $\mb{g}$ can be written as the sum of this Killing field and the homothety $\mb{k}$.
$T\mb{M}$ carries the maximally isotropic $(n+1)$-dimensional subspace 
spanned by $\{\pa_{p_A}, \pa_{\rho}\}$ which is  preserved by $\bna$. This subspace can be equivalently described by a $\bna$-parallel pure spinor $\mb{s}$ on $\mb{M}$. 
The ambient Killing field $\mb{k}$ and the ambient parallel pure spinor $\mb{s}$ correspond to a homothety $k$ of $g$ and a parallel pure spinor $\chi$ on $M$ that belong to the characterizing objects of the Patterson--Walker metric $g$, see Theorem 1 of \cite{hsstz-walker}.
\end{rema}

We conclude this section by summarizing the construction:
\begin{thm}\label{thm2}
Given a projective structure  $(N,[D])$ on an $n$-dimensional manifold $N$, 
the geometric constructions indicated in the following diagram commute:
\begin{align*}
  \xymatrix{
  (\mc{C},\na)   \ar@{~>}^{}[r]&  (\mb{M},{\mb{g}})\\
  (N,[D]) \ar@{~>}^{}[u] \ar@{~>}^{}[r]& (M,[g]) \ar@{~>}^{}[u] 
  }
\end{align*}
In particular, the induced conformal structure $[g]$ admits a globally Ricci-flat Fefferman--Graham ambient metric $\mb{g}$ which is itself a Patterson--Walker metric.
\end{thm}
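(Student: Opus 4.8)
The plan is to verify commutativity by exhibiting, along each of the two composite edges, the same split--signature metric on one common $(2n+2)$--dimensional total space, and then checking that this single metric qualifies simultaneously as a Fefferman--Graham ambient metric for $(M,[g])$ and as a Patterson--Walker metric for $(\mc{C},\na)$. The only genuine assertion is therefore that the two ways of building $(\mb{M},\mb{g})$ yield identical data.

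First I would fix the identification of the two underlying spaces. Following the top-then-right edge one lands on $T^*\mc{C}$; since $\mc{C}\cong\rr_+\times N$ with coordinates $(x^0,x^A)$, a coordinate patch on $N$ induces coordinates $(x^0,x^A,y_0,y_A)$ on $T^*\mc{C}\cong\rr_+\times T^*N\times\rr$. Following the bottom-then-up edge one lands on $\mb{M}=\rr_+\times M\times\rr=\rr_+\times T^*N(2)\times\rr$ with coordinates $(t,x^A,p_A,\rho)$. I would take the diffeomorphism $t=x^0$, $\rho=y_0/x^0$, $p_A=y_A/(x^0)^2$ of the preceding section and verify that it is well defined independently of the coordinate patch on $N$ and of the special connection $D\in[D]$, equivalently of the scale $s\colon N\to\ce_+(1)$. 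The essential point is the weight bookkeeping: the fibre coordinate $p_A$ of $M=T^*N(2)$ is scale independent by construction, while $x^0$ and the cotangent coordinate $y_A$ on $\mc{C}$ rescale in compensating ways so that $y_A/(x^0)^2$ reproduces exactly this invariant $p_A$.

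The central step is the explicit comparison of metrics. On the top-then-right edge the metric is the Patterson--Walker metric of $\na$, obtained by feeding the Thomas cone connection \eqref{thomas} into the defining formula \eqref{formula-PW} on $T^*\mc{C}$; this is the displayed metric of the preceding section. Under the coordinate change above it transforms into exactly \eqref{ambient-local}, the metric produced along the bottom-then-up edge, so the two composites agree and the square commutes. It then remains to certify the Fefferman--Graham properties of this common metric: it is Ricci--flat either by Theorem~\ref{thm1} (bottom edge) or, from the top edge, by inheriting Ricci--flatness of the Thomas cone connection $\na$ through Theorem~2 of \cite{hsstz-walker}; and it is in the normal form \eqref{FG}, being homogeneous of degree $2$ under the Euler field $t\pa_t$, carrying the canonical block $2\rho\,dt\odot dt+2t\,dt\odot d\rho$, and restricting at $\rho=0$ to a representative of $[g]$, all of which is read off directly.

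The step I expect to be the main obstacle is not any single computation but the naturality and gluing: proving that each of the four arrows is canonical and that the identifying diffeomorphism globalizes, independently of coordinate patches and of the scale trivializing $\ce_+(1)$. Concretely, one must check that a projective change $D\to D'$, i.e.\ $s\to e^f s$, induces on the bottom edge precisely the conformal change $g\to g'$ established in \cite{hsstz-walker}, while on the top edge it leaves $\na$ invariant and acts on $T^*\mc{C}$ compatibly, so that both composites respond identically and the square of diffeomorphisms closes up. Once this compatibility is secured the local comparison above patches together globally, and Theorem~\ref{thm2} follows.
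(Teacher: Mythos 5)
Your proposal is correct and follows essentially the same route as the paper: the paper's own argument in Section~2 consists precisely of computing the Patterson--Walker metric of the Thomas cone connection on $T^*\mc{C}$, invoking Ricci-flatness of $\na$ (via Theorem~2 of \cite{hsstz-walker}), and applying the coordinate change $t=x^0$, $\rho=y_0/x^0$, $p_A=y_A/(x^0)^2$ to land on the normal form \eqref{ambient-local}. The naturality and scale-independence issues you flag as the main obstacle are handled in the paper by citing \cite{hsstz-walker} for the association $(N,[D])\squig(M,[g])$ and by the observed scale-independence of the Thomas cone connection, so no new argument is needed there.
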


 We remark here that, for generic $[D]$, the resulting conformal class $[g]$ does not contain an Einstein metric, see Theorem 2 of \cite{hsstz-walker}. In particular, one obtains a large class of conformal structures which are not conformally Einstein but which admit globally Ricci-flat and explicit ambient metrics.

\section{Vanishing $Q$--curvature}
The $Q$-curvature $Q_g$ of a given metric $g$ is a Riemannian scalar invariant with a particularly simple transformation law with respect to conformal change of metric. It has been introduced by T. Branson in \cite{branson-functional} and has been the subject of intense research in recent years, see e.g. \cite{chang-eastwood-Q} for an overview. Computation of $Q$-curvature is notoriously difficult, see e.g. \cite{gover-peterson}. An explicit form of a Fefferman--Graham ambient metric $\mb{g}$ for a given metric $g$ allows a computation of $Q_g$. Using the fact that $\mb{g}$ is actually a Patterson--Walker metric, this computation is particularly simple.
\begin{thm}
The Patterson--Walker metric $g$ associated to a volume--preserving, torsion--free affine connection $D$ has vanishing $Q$-curvature $Q_g$.
\end{thm}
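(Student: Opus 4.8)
The plan is to compute $Q_g$ via its known representation in terms of the Fefferman--Graham ambient metric, exploiting the fact that we have an explicit global ambient metric from Theorem~\ref{thm1} which is itself linear in $\rho$ and of Patterson--Walker type. Recall that for $m=2n$ even, the critical $Q$-curvature is obtained from the ambient metric through the GJMS-type construction: one extends a density $f$ off $M$ into the ambient space as a harmonic function (with respect to the ambient Laplacian $\blp_{\mb{g}}$) homogeneous of the appropriate weight, and $Q_g$ appears as the obstruction to harmonic extension of the constant function at the critical order. Concretely, following Fefferman--Graham and Graham--Zworski, $Q_g$ can be read off from applying a power of the ambient Laplacian $\blp_{\mb{g}}^n$ to an ambient extension of $1$ and restricting to $\rho=0$, or equivalently from the log-term coefficient in the formal solution of the ambient Poisson equation. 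The strategy is to show this quantity vanishes identically using the special structure of $\mb{g}$.

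\emph{First I would} set up the ambient Laplacian in the explicit coordinates of \eqref{ambient-local}. The key structural input is the maximally isotropic $\bna$-parallel distribution spanned by $\{\pa_{p_A},\pa_\rho\}$ noted in the Remark, together with the homothety $\mb{k}=2p_A\pa_{p_A}+2\rho\pa_\rho$ and the Killing field. Because $\mb{g}$ is a Patterson--Walker metric on $\mb{M}=T^*\mc{C}$, the inverse metric $\mb{g}^{-1}$ has a very degenerate block structure: the fibre coordinates $p_A,\rho$ are isotropic, so the Laplacian $\blp_{\mb{g}}=\tfrac{1}{\sqrt{|\det\mb{g}|}}\pa_\mu(\sqrt{|\det\mb{g}|}\,\mb{g}^{\mu\nu}\pa_\nu)$ contains no $\pa_{p_A}\pa_{p_B}$ or $\pa_\rho^2$ or $\pa_{p_A}\pa_\rho$ second-order terms. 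One checks moreover that $\det\mb{g}$ is constant (the Patterson--Walker determinant depends only on the volume form $D$ preserves, hence is locally constant in these coordinates), which eliminates the first-order correction terms. Thus $\blp_{\mb{g}}$ reduces to a sum of mixed derivatives $2\,\pa_{x^A}\pa_{p_A}+2\,\pa_t\pa_\rho$ plus terms with $\pa_{x^A}\pa_{x^B}$-type second derivatives weighted by the inverse of the $g_{AB}$-block; crucially every surviving second-order term differentiates at least once in an isotropic fibre direction.

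\emph{The decisive step} is then to exhibit the ambient extension of the constant function (and of the relevant densities) that solves the ambient harmonic equation with no log-term obstruction. Here I would use that the constant function $\mb{f}\equiv 1$, or more precisely the density of weight $-n$ extended as a homogeneous function of $t$ alone, is independent of the fibre coordinates $p_A,\rho$. Since every term of $\blp_{\mb{g}}$ carries a derivative in an isotropic fibre direction, any ambient function depending only on $t$ and $x^A$ in a suitably controlled way is annihilated, or at least the iterative Poisson construction terminates without producing an obstruction at the critical order $n$. This matches the phenomenon already observed for the ambient metric itself, namely that the $\rho$-expansion is linear and the Fefferman--Graham iteration stops after one step; the same mechanism forces the $Q$-curvature obstruction to vanish. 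Because $Q_g$ is precisely this obstruction, one concludes $Q_g=0$.

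\emph{The main obstacle} I anticipate is the bookkeeping of weights and the precise identification of $Q_g$ with the ambient quantity: the critical $Q$-curvature is defined through a solution of $\blp_{\mb{g}}\mb{F}=O(\rho^{n})$ normalized by its behaviour at $\rho=0$, and one must ensure that the naive extension of $1$ is the correct normalized density rather than an artifact, and that the homogeneity degree $-n$ under the Euler field $t\pa_t$ is respected. One must also verify that the constancy of $\det\mb{g}$ and the isotropy claims hold globally, not just formally at $\rho=0$, which is where the global Ricci-flatness from Theorem~\ref{thm1} is essential. Once these structural facts are in place, the vanishing of $Q_g$ follows because the obstruction term that defines it is built entirely from fibre-isotropic differentiations acting on a fibre-independent extension.
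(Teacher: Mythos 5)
Your overall strategy --- compute $Q_g$ from the explicit ambient metric of Theorem~\ref{thm1} and exploit the fact that $\mb{g}$ is itself of Patterson--Walker type, so that every component of $\mb{g}^{-1}$ carries at least one index in the isotropic fibre directions $\pa_{p_A},\pa_\rho$ --- is exactly the paper's, and the structural fact you isolate (the ambient Laplacian $\blp$ annihilates any function of $t$ and $x^A$ alone) is the correct and decisive one. The genuine gap is in your identification of the ambient quantity that computes $Q_g$. The obstruction to harmonically extending the constant function at the critical order is $P_{2n}1$, and this vanishes for \emph{every} conformal structure in the critical dimension, since the critical GJMS operator has no zeroth-order term; so the computation you primarily describe cannot detect $Q_g$ and would ``prove'' $Q_g=0$ for any metric. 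The correct ambient representative, due to Fefferman and Hirachi \cite{fefferman-hirachi} and used in the paper, is $Q_g=\bigl(-\blp^{n}\log t\bigr)\big|_{\rho=0,\,t=1}$: the non-homogeneous function $\log t$ is essential and cannot be replaced by a power of $t$ or by the constant $1$. You flagged precisely this normalization issue as your ``main obstacle'' but did not resolve it. Fortunately the fix is immediate given your own mechanism: $\log t$ is horizontal (a function of $t$ alone, i.e.\ pulled back from the Thomas cone $\mc{C}$ via $T^*\mc{C}\to\mc{C}$), so $\blp\log t=0$ and hence $\blp^{n}\log t=0$, which is exactly the paper's one-line conclusion.

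A secondary inaccuracy: in the coordinates $(t,\rho,x^A,p_A)$ of \eqref{ambient-local} the determinant of $\mb{g}$ is not constant --- it equals $\pm t^{4n+2}$ --- although it is constant in the Patterson--Walker coordinates $(x^0,x^A,y_0,y_A)$ on $T^*\mc{C}$. This does not damage your conclusion, because the first-order terms of $\blp$ acting on a horizontal function only involve derivatives of $\sqrt{|\det\mb{g}|}\,\mb{g}^{\mu\nu}$ in the fibre directions $\rho$ and $p_A$, on which these quantities do not depend; but as written your justification for discarding the first-order terms is incorrect. The paper sidesteps this entirely by working with the Christoffel symbols of the Patterson--Walker metric on $T^*\mc{C}$ (Section~2.1 of \cite{hsstz-walker}), from which it is visible that $\blp$ kills every horizontal function.
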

\begin{proof}
We follow the computation method for $Q_{g}$ from \cite{fefferman-hirachi}: For this it is necessary
to compute $-\blp^{n}\log(t)$,
where $\blp$ is the ambient Laplacian on $\mb{M}=\rr_+\times T^*N\times\rr$ associated to $\mb{g}$ and $t:\mb{M}\goesto\rr_+$ is the first coordinate. Restricting $-\blp^n\log(t)$
to the cone $\rr_+\times T^*N\times\{0\}$ and evaluating at $t=1$ yields $Q_{g}$ (see \cite{fefferman-hirachi}). To show that Q-curvature vanishes for $g$, it is in particular sufficient to show that $\blp \log(t)=0$. However, the function $t:\mb{M}\goesto\rr_+$ is horizontal since it is just the pullback of the coordinate function $x^0:\ce_{+}(1)\goesto\rr_+$ on the Thomas cone $\mc{C}\cong \rr_+\times N$ via the canonical projection $T^*\mc{C}\overset{}\goesto\mc{C}$.
It follows from the explicit formula for the Christoffel symbols 
of a Patterson--Walker metric that $\blp$ vanishes on any horizontal function, see \cite{hsstz-walker}, section 2.1. Thus in particular $\blp \log(t)=0$, and then also $Q_g=0$.
\end{proof}

\newcommand{\etalchar}[1]{$^{#1}$}
\def\polhk#1{\setbox0=\hbox{#1}{\ooalign{\hidewidth
  \lower1.5ex\hbox{`}\hidewidth\crcr\unhbox0}}}

\end{document}